\numberwithin{equation}{section}
\begin{document}


\title{Formulation of Stochastic Contact Hamiltonian Systems } 



\author{Pingyuan Wei}
\email{weipingyuan@hust.edu.cn}
\affiliation{School of Mathematics and Statistics \& Center for Mathematical Sciences, Huazhong University of Science and Technology, Wuhan 430074,  China}

\author{Zibo Wang}
\email{(Corresponding author) zibowang@hust.edu.cn}
\affiliation{School of Mathematics and Statistics \& Center for Mathematical Sciences, Huazhong University of Science and Technology, Wuhan 430074,  China}


\date{\today}

\newtheorem{thm}{Theorem}[section]
\newtheorem{cor}[thm]{Corollary}
\newtheorem{lem}[thm]{Lemma}
\newtheorem{lemma}[thm]{Lemma}
\theoremstyle{remark}
\newtheorem{rem}[thm]{Remark}
\newtheorem{remark}[thm]{Remark}
\theoremstyle{definition}
\newtheorem{defn}[thm]{Definition}
\newtheorem{example}[thm]{Example}
\newcommand{\ZZ}{\mathbb{Z}}
\newcommand{\QQ}{\mathbb{Q}}
\newcommand{\NN}{\mathbb{N}}
\newcommand{\RR}{\mathbb{R}}
\newcommand{\CC}{\mathbb{C}}
\newcommand{\TT}{\mathbb{T}}
\newcommand{\EE}{\mathbb{E}}
\newcommand{\PP}{\mathbb{P}}
\newcommand{\op}{\operatorname}
\providecommand{\abs}[1]{\lvert#1\rvert}
\providecommand{\norm}[1]{\lVert#1\rVert}

\begin{abstract}
In this work we devise a stochastic version of contact Hamiltonian systems, and show that the phase flows of these systems preserve contact structures. Moreover, we provide a sufficient condition under which these stochastic contact Hamiltonian systems are completely integrable. This establishes an appropriate framework for investigating stochastic contact Hamiltonian systems.\\

\noindent Keywords: Contact Hamiltonian systems, contact vs. symplectic structures, preserving contact structure, completely integrable.\\

{\bf It is well known that contact Hamiltonian systems are the ``odd-dimensional cousins" of symplectic Hamiltonian systems. Contact Hamiltonian systems arise in the description of dissipative systems, and have found applications in many areas of science. Motivated by Bismut's work about stochastic Hamiltonian systems on symplectic manifolds that preserve symplectic structures, we propose a stochastic version of contact Hamiltonian systems which preserve contact structures. We also analyse when a stochastic contact Hamiltonian system is completely integrable.  Two examples are presented to illustrate our results.
}


\end{abstract}

\pacs{}

\maketitle 

\section{Introduction}

Hamiltonian systems on symplectic manifolds are the natural framework for classical mechanics and statistical physics, and the starting point for their quantum counterparts \cite{Bravetti2017entropy}. As models for conservative systems, they have been extensively studied \cite{Arnold2013mathematical}. Hamiltonian systems on contact manifolds are an appropriate scenario for certain dissipative systems \cite{DeLeon2019}. Contact Hamiltonian systems are not only an odd dimensional counterpart of symplectic cases, but also contain rich geometric structures. In recent years, contact Hamiltonian systems have attracted a lot of attentions. For example, they are used for the study of thermodynamics \cite{Bravetti2019contact}, optimal control \cite{Ramirez2016partial} and fluid mechanics \cite{Ghrist2007contact}. Some theories such as weak KAM theory \cite{Wang2019aubry}, complete integrability \cite{Boyer2011completely,Visinescu2017contact} and variational principle \cite{Wang2019variational,Liu2018contact} for contact Hamiltonian systems have been developed recently. Contact algorithms are also proposed to simulate such systems \cite{Vermeeren2019contact}.

Stochastic dynamical systems can be regarded as models that take into account the influence of the random environment \cite{Duan}. There are recent studies on Hamiltonian systems with random perturbations; for example, Brin-Freidlin \cite{Brin2000stochastic} and MacKay \cite{mackay2010langevin}. A stochastic Hamiltonian system preserving symplectic structure was proposed by Bismut \cite{Bi}. Following this framework, Li \cite{Li} and Wei \cite{Pingyuan2019} discussed averaging principles for completely integrable stochastic Hamiltonian systems. Moreover, the symplectic structure-preserving numerical algorithms are developed. For example, Wang et al. \cite{wang2009dynamics} designed a stochastic variational integrator using Hamilton's principle.

Integrability is a topic worth discussing in dynamical systems. The famous Arnold-Liouville theorem describes a connection between completely integrable Hamiltonian systems and toric geometry in the symplectic setting\cite{Boyer2011completely}. Under the hypothesis of Arnold-Liouville theorem, a Hamiltonian system can be expressed by the action-angle coordinate, such that a first integral only depends on the action variable, while the angle variable changes on torus\cite{Arnold2013mathematical}. As mentioned earlier, a contact version of complete integrability has been stuided recently.

In this paper, we devise a class of stochastic contact Hamiltonian systems which preserve contact structures. Then we provide a sufficient condition under which these stochastic contact Hamiltonian systems are completely integrable.

This paper is organized as follows. In section 2, we present a class of stochastic contact Hamiltonian systems. In section 3, we demonstrate that the phase flow of these systems preserve contact structures. The key idea of the proof is a chain rule that holds for Stratonovitch differentials. In section 4, we show that a class of stochastic contact Hamiltonian systems are completely integrable. Finally, we present two illustrative examples. In particular, we establish the generalized contact action-angle coordinates in the second example.

\section{Stochastic Contact Hamiltonian Systems}

A smooth $(2n+1)$-dimensional differential manifold $M$ is said to be a contact manifold if it is equipped with a contact structure, which is a nondegenerate $1$-form $\eta$ such that $\eta\wedge (d\eta)^n$ is a volume form (i.e., it is nonzero at each point of $M$).

Given a smooth (Hamiltonian) function $H_0$ and a family of $d$ smooth (Hamiltonian) functions $\{ H_k \}_{k=1}^{d}$ on $M$. The corresponding contact Hamiltonian vector fields, denoted by $X_{H_0}$ and $X_{H_k}$ $(k=1,2,...,d)$, are defined through the following intrinsic relations \cite{Bravetti2017contact}
\begin{align}\label{Ham}
H_i=\iota_{X_{H_i}}\eta\;\;\;\;\text{and}\;\;\;\;dH_i=-\iota_{X_{H_i}}d\eta+\mathcal{R}(H_i)\eta
\end{align}
for $i=0,1,\cdots,d$, where $\mathcal{R}$ is Reeb vector field (which is the unique vector field such that $\iota_{\mathcal{R}}\eta=1$ and $\iota_{\mathcal{R}}d\eta=0$).

We propose the following stochastic contact Hamiltonian system:
\begin{equation}\label{Equation-1}
dx_t=X_{H_0}(x_t)dt+\sum_{k=1}^d X_{H_k} (x_t) \circ dB_t^k , ~ x(t_0)=x_0\in M,
\end{equation}
where $\{B_t^k\}_{k=1}^d$ are pairwise independent Brownian motions on a probability space $( \Omega , \mathscr{F}, P)$ and $``\circ"$ stands for Stratonovitch differentials. We call $X_{H_0}$ the drift vector field and $X_{H_k}$ the diffusion vector fields.

\begin{remark}
Notice that we have chosen to write \eqref{Equation-1} in Stratonovich rather than It\^o stochastic differentials. This is mainly because Stratonovich differentials has the advantage of leading to ordinary chain rule of the Newton-Leibniz type under coordinates transformation. Such a property offers some reduction in calculations, and makes the Stratonovich differential natural to use especially in connection with stochastic differential equations on manifolds.
We also note that Brownian motions in \eqref{Equation-1} can be extended to more general stochastic processes, for example L\'evy processes or semimartingales. Accordingly, the Stratonovich differentials should be replaced by Marcus differentials.

\end{remark}

\begin{remark}
Formally, equation \eqref{Equation-1} can be rewritten in the form of a Hamiltonian system
$
\frac{d}{dt}x_t=X_{\tilde{H}}(x_t)
$
with a ``randomized'' contact Hamiltonian $\tilde{H}=H_0+\sum_{k=1}^dH_kw_t^k$, where $w_t$ is an $d$-dimensional noise (which is a Gaussian white noise for the Brownian case). One may regard this as a contact Hamiltonian within the external world. A ``deterministic" contact Hamiltonian system with Hamiltonian $H_0$ is considered as a dissipative mechanical system. The stochastic part in \eqref{Equation-1} is introduced to characterize the complicated interaction between the deterministic system and the fluctuating environment.
Nevertheless, in the next section, we will show that the stochastic flow for equation (2.2) still preserves the contact strucure. If the stochatsic part in (2.2) takes other forms, the corrsponding stochastic flow may not preserve the contcat structure.
\end{remark}

In view of some concrete applications, it is convenient to write system \eqref{Equation-1} in local coordinates. According to Darboux theorem\cite{Geiges2008} for contact manifolds, around each point in $M$, one can find local (called Darboux  or canonical) coordinates $(q,p,z)=(q_1,\cdots,q_n,p_1,\cdots,p_n,z)$ such that
\begin{equation}\label{Darboux}
\eta=dz-pdq,\;\;\;\;\mathcal{R}=\frac{\partial}{\partial z}.
\end{equation}
Here $pdq$ has to be read as $\sum_{j=1}^np_jdq_j$. This same shorthand notation applies to analogous expressions below.

In Darboux coordinates, we get the following local expressions for Hamilton vector fields
\begin{align}\label{Darboux vector field}
X_{H_i}=\frac{\partial H_i}{\partial p}\frac{\partial}{\partial q}-\left( \frac{\partial H_i}{\partial q}+p\frac{\partial H_i}{\partial z}\right)\frac{\partial}{\partial p}+\left(p\frac{\partial H_i}{\partial p}-H_i\right)\frac{\partial}{\partial z},
\end{align}
for $i=0,1,\cdots,d$. Therefore, a canonical stochastic contact Hamiltonian system can be written as
\begin{align}
dq&=\frac{\partial H_0}{\partial p}dt+\sum_{k=1}^d \frac{\partial H_k}{\partial p}  \circ dB_t^k,  \label{canonical eq1}\\
dp&=-\left( \frac{\partial H_0}{\partial q}+p\frac{\partial H_0}{\partial z}\right)dt-\sum_{k=1}^d \left( \frac{\partial H_k}{\partial q}+p\frac{\partial H_k}{\partial z}\right)  \circ dB_t^k, \label{canonical eq2}\\
dz&=\left(p\frac{\partial H_0}{\partial p}-H_0\right)dt +\sum_{k=1}^d \left(p\frac{\partial H_k}{\partial p}-H_k\right)\circ dB_t^k \label{canonical eq3}
\end{align}
with initial state $(q(t_0),p(t_0),z(t_0))\triangleq(\alpha,\beta,\gamma)$, $t_0\geqslant 0$. 

\section{Conformal Contactomorphism}
As in the deterministic case \cite{Geiges2008}, we are interested in the system \eqref{canonical eq1}-\eqref{canonical eq3} such that the solution mapping $(\alpha, \beta,\gamma) \to (p,q,z)$ preserves the natural contact structure (leaves the contact form invariant) up to multiplication by a conformal factor. That is, the phase flow $\phi_t$ of the system is a conformal contactomorphism 
in the sense
\begin{equation}
\phi_t^\ast\eta=\lambda_t\eta
\end{equation}
 for a nowhere zero function (i.e., conformal factor) $\lambda_t:M\to \RR$. In local coordinates, we only need to focus on
 \begin{equation}\label{preserve}
 dz-pdq=\lambda_t(d\gamma-\beta d \alpha).
 \end{equation}
Note that the conformal factor $\lambda_t$, with $t\in[t_0,\infty)$, is a smooth family with $\lambda_{t_0}=1$. Hence all $\lambda_t$ take value in $\RR^+$. To avoid confusion, we should note that the differentials in \eqref{canonical eq1}-\eqref{canonical eq3} and \eqref{preserve} have different meanings: In \eqref{canonical eq1}-\eqref{canonical eq3}, $p,q,z$ are treated as functions of time, while, in \eqref{preserve}, the differentiation is made with respect to the initial data $\alpha,\beta,\gamma$.

\begin{remark}
For deterministic case, i.e., $\dot{x}=X_{H_0}(x)$, the corresponding phase flow $\psi_t$ is a conformal contactomorphism. Indeed, by Cartan's identity and \eqref{Ham}, we have $\mathcal{L}_{X_{H_0}}\eta=-\mathcal{R}(H)\eta$. Note that $\frac{\partial}{\partial t}\psi_t^\ast\eta=\psi_t^\ast\mathcal{L}_{X_{H_0}}\eta=\psi_t^\ast(-\mathcal{R}(H)\eta)=(-\mathcal{R}(H)\circ \psi_t)\psi_t^\ast\eta$. We conclude that $\psi_t^\ast\eta=\lambda_t\eta$ with $\lambda_t=\exp \big(- \int_{t_0}^t ( \mathcal{R}(H)\circ \psi_\tau )\big)d\tau$. We also remark that a map $f$ is called a (strict) contactomorphism if $f^\ast\eta\equiv\eta$.
\end{remark}

\begin{thm} {\bf (Preserving the Contact Structure)}
The phase flow of the stochastic contact Hamiltonian system \eqref{canonical eq1}-\eqref{canonical eq3} is a conformal contactomorphism (i.e., it preserves the contact structure) with a conformal factor
 \begin{equation}\label{factor}
\lambda_t=\exp{\big(-\int_{t_0}^t\frac{\partial}{\partial z}H_0d\tau-\sum_{k=1}^d\int_{t_0}^t\frac{\partial}{\partial z}H_k\circ dB_\tau^k\big)}.
 \end{equation}
\end{thm}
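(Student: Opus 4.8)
The plan is to work in the Darboux coordinates \eqref{Darboux} and to follow the time evolution of the (random, $t$-dependent) one-form
\[
\theta_t:=dz_t-p_t\,dq_t ,
\]
where $(q_t,p_t,z_t)$ denote the components of the phase flow $\phi_t$ viewed as functions of the initial data $(\alpha,\beta,\gamma)$ and $d$ denotes exterior differentiation in those initial variables. At $t=t_0$ one has $\theta_{t_0}=d\gamma-\beta\,d\alpha$, so the target identity \eqref{preserve} is exactly the assertion $\theta_t=\lambda_t\theta_{t_0}$; hence it suffices to derive an evolution equation for $\theta_t$ and integrate it.

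The crucial ingredient is the Newton--Leibniz behaviour of Stratonovich differentials emphasized in Remark~2.1: the Stratonovich time-differential $d_t$ commutes with the exterior differential $d$ taken in the initial data, and $d_t$ obeys the ordinary product rule on products of scalar functions with forms. Granting this, I differentiate $d_t\theta_t=d(d_t z_t)-(d_t p_t)\,dq_t-p_t\,d(d_t q_t)$, substitute the right-hand sides of \eqref{canonical eq1}--\eqref{canonical eq3} for $d_t q_t,\ d_t p_t,\ d_t z_t$, and expand every coefficient via the chain rule $dH_i=\tfrac{\partial H_i}{\partial q}dq+\tfrac{\partial H_i}{\partial p}dp+\tfrac{\partial H_i}{\partial z}dz$ together with the analogous expansion of $d\big(\tfrac{\partial H_i}{\partial p}\big)$. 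For each Hamiltonian separately — $H_0$ paired with $dt$ and each $H_k$ paired with $\circ dB_t^k$ — the second-derivative terms cancel and the first-order terms collapse, leaving the linear equation
\[
d_t\theta_t=-\frac{\partial H_0}{\partial z}\,\theta_t\,dt-\sum_{k=1}^d\frac{\partial H_k}{\partial z}\,\theta_t\circ dB_t^k ,
\]
all partials being evaluated along $x_t$. (Invariantly this reads $d_t(\phi_t^\ast\eta)=\phi_t^\ast(\mathcal L_{X_{H_0}}\eta)\,dt+\sum_k\phi_t^\ast(\mathcal L_{X_{H_k}}\eta)\circ dB_t^k$ together with $\mathcal L_{X_{H_i}}\eta=-\mathcal R(H_i)\eta$, exactly as in the deterministic computation recalled above.) The coefficients are scalar, so this equation splits into one copy of the same scalar linear Stratonovich SDE for each component of $\theta_t$; by existence and uniqueness, $\theta_t=\lambda_t(d\gamma-\beta\,d\alpha)$ where $\lambda_t$ solves $d_t\lambda_t=-\tfrac{\partial H_0}{\partial z}\lambda_t\,dt-\sum_k\tfrac{\partial H_k}{\partial z}\lambda_t\circ dB_t^k$ with $\lambda_{t_0}=1$. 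A scalar linear Stratonovich SDE integrates precisely as an ODE (no It\^o correction arises), so $\lambda_t=\exp\big(-\int_{t_0}^t\tfrac{\partial H_0}{\partial z}\,d\tau-\sum_k\int_{t_0}^t\tfrac{\partial H_k}{\partial z}\circ dB_\tau^k\big)$, which is \eqref{factor}; being an exponential it is nowhere zero, so $\phi_t$ is a conformal contactomorphism.

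I expect the genuine work to be the rigorous justification of the Stratonovich chain rule used above: that $\phi_t$ is almost surely a smooth stochastic flow of diffeomorphisms in the initial data (Kunita's theory, since the $X_{H_i}$ are smooth vector fields), that one may interchange differentiation in $(\alpha,\beta,\gamma)$ with the Stratonovich integral in $t$, and that $d_t$ then commutes with $d$ and acts as a derivation — each of which is proved by passing to It\^o form, checking that the correction terms cancel, and invoking Fubini/dominated-convergence arguments under suitable local regularity and growth hypotheses on the $H_i$. Once that is in place, the remainder is the routine expansion sketched above, most economically carried out once for a generic smooth Hamiltonian $H$ and then specialized to the drift term $H_0$ and to each diffusion term $H_k$.
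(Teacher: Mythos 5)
Your proposal is correct and follows essentially the same route as the paper: both rest on the Stratonovich chain rule to show that the pullback of $\eta=dz-p\,dq$ under the flow satisfies the scalar linear Stratonovich equation $d_t\theta_t=-\frac{\partial H_0}{\partial z}\theta_t\,dt-\sum_k\frac{\partial H_k}{\partial z}\theta_t\circ dB_t^k$, whose solution with $\lambda_{t_0}=1$ gives the exponential conformal factor \eqref{factor}. The only difference is presentational: you phrase the computation invariantly via $\theta_t$ and $\mathcal{L}_{X_{H_i}}\eta=-\mathcal{R}(H_i)\eta$, whereas the paper carries out the identical cancellation componentwise through the variational equations \eqref{SDE11}--\eqref{SDE13} and the conditions \eqref{preserve1}, both reducing to the same equation \eqref{preserve-new3} for $\lambda_t$.
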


\begin{proof}
Notice that the condition \eqref{preserve} for phase flow to be conformal contactomorphism can be rewritten as
\begin{align}\label{preserve1}
\left\{
\begin{array}{rl}
\frac{\partial z}{\partial \alpha_{r}}-\sum_{j=1}^np_j\frac{\partial q_j}{\partial \alpha_{r}}=&-\lambda_t \beta_r,  \;\;\;\;  r=1,\cdots,n, \\
\frac{\partial z}{\partial \beta_{r}}-\sum_{j=1}^np_j\frac{\partial q_j}{\partial \beta_{r}}=&0,  \;\;\;\;\;\;\;\;\;\;\;  r=1,\cdots,n, \\
\frac{\partial z}{\partial \gamma}-\sum_{j=1}^np_j\frac{\partial q_j}{\partial \gamma}=&\lambda_t . \\
\end{array}
\right.
\end{align} 
For convenience, we adopt the following notation (for $j,r=1,\cdots,n$):
\begin{align}\label{notation-Hpqz2}
q_\alpha^{jr}=\frac{\partial q_j}{\partial \alpha_r},  \;\;q_\beta^{jr}=\frac{\partial q_j}{\partial \beta_r}, \; \;   q_\gamma^{j}=\frac{\partial q_{j}}{\partial \gamma} .
\end{align}
Similarly, we define $p_\alpha^{jr},p_\beta^{jr},p_\gamma^{j},z_\alpha^{r},z_\beta^r$ and $z_\gamma$.

By calculating at $(q,p,z)=(q(t;t_0,\alpha,\beta,\gamma),p(t;t_0,\alpha,\beta,\gamma),z(t;t_0,\alpha,\beta,\gamma))$ which is a solution to system \eqref{canonical eq1}-\eqref{canonical eq3}, we conclude that  $p_\gamma^{j}, q_\gamma^{j}$, $z_\gamma$ ($j=1,\cdots,n$) satisfy the following system of stochastic differential equations
\begin{align}
dq_\gamma^{j}=&\sum_{l=1}^n(\frac{\partial^2 H_0}{\partial p_j \partial q_l}q_\gamma^{l}+\frac{\partial^2 H_0}{\partial p_j \partial p_l}p_\gamma^{l}+\frac{\partial^2 H_0}{\partial p_j \partial z}z_\gamma)dt \notag\\
&+ \sum_{k=1}^d\sum_{l=1}^n(\frac{\partial^2 H_k}{\partial p_j \partial q_l}q_\gamma^{l}+\frac{\partial^2 H_k}{\partial p_j \partial p_l}p_\gamma^{l}+\frac{\partial^2 H_k}{\partial p_j \partial z}z_\gamma)\circ dB_t^k,
\label{SDE11} \\
dp_\gamma^{j}=&-\sum_{l=1}^n\Big[\Big(\frac{\partial^2 H_0}{\partial q_j \partial q_l}+p_j\frac{\partial^2 H_0}{\partial z\partial q_l}\Big)q_\gamma^{l}+\Big(\frac{\partial^2 H_0}{\partial q_j \partial p_l}+\delta_{jl}\frac{\partial H_0}{\partial z}+p_j\frac{\partial^2 H_0}{\partial z \partial p_l}\Big)p_\gamma^{l}\notag\\
&\;\;\;\;+\Big(\frac{\partial^2 H_0}{\partial q_j \partial z}+p_j\frac{\partial^2 H_0}{\partial z^2}\Big)z_\gamma\Big]dt \notag\\
&- \sum_{k=1}^d\sum_{l=1}^n\Big[\Big(\frac{\partial^2 H_k}{\partial q_j \partial q_l}+p_j\frac{\partial^2 H_k}{\partial z\partial q_l}\Big)q_\gamma^{l}+\Big(\frac{\partial^2 H_k}{\partial q_j \partial p_l}+\delta_{jl}\frac{\partial H_k}{\partial z}+p_j\frac{\partial^2 H_k}{\partial z \partial p_l}\Big)p_\gamma^{l}\notag\\
&\;\;\;\;+\Big(\frac{\partial^2 H_k}{\partial q_j \partial z}+p_j\frac{\partial^2 H_k}{\partial z^2}\Big)z_\gamma\Big]\circ dB_t^k,
\label{SDE12}\\
dz_\gamma=&\sum_{l=1}^n\Big[\Big(\sum_{j=1}^n p_j\frac{\partial^2 H_0}{\partial p_j\partial q_l}-\frac{\partial H_0}{\partial q_l}\Big)q_\gamma^{l}+\Big(\sum_{j=1}^n p_j\frac{\partial^2 H_0}{\partial p_j\partial p_l}\Big)p_\gamma^{l}\notag\\
&\;\;\;\;+\Big(\sum_{j=1}^n p_j\frac{\partial^2 H_0}{\partial p_j\partial z}-\frac{\partial H_0}{\partial z}\Big)z_\gamma\Big]dt \notag\\
&+ \sum_{k=1}^d\sum_{l=1}^n\Big[\Big(\sum_{j=1}^n p_j\frac{\partial^2 H_k}{\partial p_j\partial q_l}-\frac{\partial H_k}{\partial q_l}\Big)q_\gamma^{l}+\Big(\sum_{j=1}^n p_j\frac{\partial^2 H_k}{\partial p_j\partial p_l}\Big)p_\gamma^{l}\notag\\
&\;\;\;\;+\Big(\sum_{j=1}^n p_j\frac{\partial^2 H_k}{\partial p_j\partial z}-\frac{\partial H_k}{\partial z}\Big)z_\gamma\Big]\circ dB_t^k,\label{SDE13}
\end{align}
with initial state $((q_\alpha^{jr}),(p_\alpha^{jr}),z_\alpha^{r})=(\delta_{jr},0,0)$.

Keeping in mind the initial state $(q(t_0),p(t_0),z(t_0))=(\alpha,\beta,\gamma)$, it is clear that $\lambda_{t_0}=1$. The third condition in \eqref{preserve1} is fulfilled if and only if
\begin{align}\label{preserve23}
dz_\gamma(t)-\sum_j dp_j(t)\cdot q^j_\gamma(t)-\sum_j p_j\cdot dq_\gamma(t)=d\lambda_t.
\end{align}
Due to \eqref{canonical eq2}, \eqref{SDE11} and \eqref{SDE13}, we conclude that the relation \eqref{preserve23} becomes
\begin{align}\label{preserve23-1}
\Big(z_\gamma -\sum_j p_jq_\gamma^{r}\Big)\left(\frac{\partial H_0}{\partial z}dt+\sum_{k=1}^d\frac{\partial H_k}{\partial z}\circ d{B}_t^k \right)&=-d{\lambda}_t.
\end{align}
Similarly, the first and the second conditions in \eqref{preserve1} are fulfilled if and only if
\begin{align}
\Big(z_\alpha^r -\sum_j p_jq_\alpha^{jr}\Big)\left(\frac{\partial H_0}{\partial z}dt+\sum_{k=1}^d\frac{\partial H_k}{\partial z}\circ d{B}_t^k \right)&=\beta_rd{\lambda}_t, \\
\Big(z_\beta^r -\sum_j p_jq_\beta^{jr}\Big)\left(\frac{\partial H_0}{\partial z}dt+\sum_{k=1}^d\frac{\partial H_k}{\partial z}\circ d{B}_t^k \right)&=0.
\end{align}
As a consequence, the condition \eqref{preserve1} holds if and only if
\begin{align}
\lambda_t\left(\frac{\partial H_0}{\partial z}+\sum_{k=1}^d\frac{\partial H_k}{\partial z}\circ \dot{B}_t^k \right)&=-\dot{\lambda}_t, \label{preserve-new3}
\end{align}
 with $\lambda_{t_0}=1$. By solving equation \eqref{preserve-new3}, we have
 $$\lambda_t=\exp{\big(-\int_0^t\frac{\partial}{\partial z}H_0(q_\tau,p_\tau,z_\tau)d\tau-\sum_{k=1}^d\int_0^t\frac{\partial}{\partial z}H_k(q_\tau,p_\tau,z_\tau)\circ dB_\tau^k\big)}.$$
This implies that \eqref{preserve1} holds and hence conformal contactomorphism condition \eqref{preserve} is fulfilled. The proof is complete.
\end{proof}

\section{Complete Integrability}
In Bismut's \cite{Bi} setting, a class of stochastic completely integrable (symplectic) Hamiltonian systems has been studied. See more information in Li \cite{Li}. The integrability for contact Hamiltonian systems has also drawn much recent attention \cite{Arnold2013mathematical,Boyer2011completely}.

Note that although any smooth function can be chosen as a contact Hamiltonian, it is often convenient to choose the function $1=\iota_{\mathcal{R}}\eta$ as the Hamiltonian, making the Reeb vector field $\mathcal{R}$ the Hamiltonian vector field. The Hamiltonian contact structure in this case is said to be of Reeb type \cite{Boyer2011completely}. From now on, we will focus on this case and introduce the corresponding complete integrability conditions.

Similar to the Poisson bracket in symplectic geometry, the Lie algebra structure of $C^\infty(M)$ in contact Hamiltonian setting is given by the Jacobi bracket
\begin{align}\label{Jacobi bracket}
[f,g]_\eta=-\iota_{X_f}\iota_{X_g}d\eta+f\iota_{\mathcal{R}}dg-g\iota_{\mathcal{R}}df.
\end{align}
It is worth mentioning that this bracket is bilinear, antisymmetric, and satisfies the Jacobi identity. Furthermore, it fulfils a ``weak" Leibniz rule:
\begin{align}
[f,gh]_\eta=[f,g]_\eta h+g[f,h]_\eta-[f,1]_\eta,
\end{align}
which is the key difference with the Poisson bracket.

If a function $h$ satisfies that $[h,1]_\eta=-\iota_{\mathcal{R}}dh=0$, it represents a first integral of the vector field $\mathcal{R}$. A contact Hamiltonian structure of Reeb type is said to be completely integrable if there exists $(n+1)$ first integrals $h_0=1,h_1,\cdots,h_n$ that are independent and in involution. That is,
the corresponding  Hamiltonian vector fields are linearly independent at almost all points and
\begin{align}\label{Integrability}
[h_i,1]_\eta=[h_i,h_j]_\eta=0,\;\;i,j=1,2,\cdots,n.
\end{align}
We remark that definition of complete integrability can be generalized to any Hamiltonian. See Boyer \cite{Boyer2011completely} for details. We point out that, for the contact form $\eta$ and a given Hamiltonian $H$,  a function $f$ commute with $H$ may not be equivalent to $f$ being a first integral, unless $H$ is constant along the flow of the Reeb vector field.

We also note that a completely integrable contact Hamiltonian system is said to be of toric type, if the corresponding vector fields $X_{h_0}=\mathcal{R},X_{h_1},\cdots,X_{h_n}$ form the Lie algebra of a torus $\TT^{n+1}$. Analogous to that of the celebrated Arnold-Liouville theorem in the symplectic setting, it is possible to introduce the generalized action-angle variables in contact manifolds. The action of a torus $\TT^{n+1}$ on a $(2n+1)$-dimensional contact manifold $(M,\eta)$ is completely integrable if it is effective and preserve the contact structure \cite{Lerman2001}.\\

We call \eqref{Equation-1} a stochastic completely integrable contact Hamiltonian system, if the family of $d$ (for convenience, we set $d=n+1$ here) smooth contact Hamiltonians $\{ H_k \}_{k=1}^{d}$ form a completely integrable system in the sense that they are independent and in involution, and if $H_0$ commutes with this family under Jacobi bracket.

\begin{example}
{\bf(A stochastic dissipative mechanical system)}  We consider the product manifold $M=\RR\times T^\star\RR^2$ endowed with the canonical contact structure $\eta=z-p_1dq_1-p_2dq_2$, where $z$ and $(q,p) = (q_1,q_2,p_1,p_2)$ are global coordinates on $\RR$ and $T^\star\RR^2$, respectively. Let $H:M\to\RR$ be a contact Hamiltonian function given by
\begin{align}
H(q,p,z)=\frac{1}{2m}(p_1^2+p_2^2)+V(q)+\gamma z, 
\end{align}
with $V(q)$ a potential function and $\gamma$ a positive constant. This Hamiltonian corresponds to a system with a friction force that depends linearly on the velocity (in this case, on the momentum). When such a contact Hamiltonian system is perturbed by noise, we have the following equation:
\begin{align}\label{SDMS1}
\frac{dq_i}{dt}=\frac{p_i}{m},\;\frac{dp_i}{dt}=-\frac{\partial V}{\partial q_i}-\gamma p_i,\;i=1,2,\;\text{and}\;
\frac{dz}{dt}=\frac{1}{m}(p_1^2+p_2^2)-H+\varepsilon \dot{B}_t
\end{align}
or, equivalently,
\begin{align}\label{SDMS2}
m\ddot{q}_i+\gamma m\dot{q}_i+\frac{\partial V}{\partial q_i}=0,\;i=1,2,\;\text{and}\;
\frac{dz}{dt}=\frac{1}{2}m(\dot{q}_1^2+\dot{q}_2^2)-V(q)-\gamma z+\varepsilon \dot{B}_t,
\end{align}
where $\dot{B}_t$ is a Gaussian white noise and $\varepsilon$ is the noise intensity.
With $H_0=H$ and $H_1=\varepsilon$, equation \eqref{SDMS1} or \eqref{SDMS2} is indeed a stochastic contact Hamiltonian system. The stochastic flow preserve the contact structure with conformal factor $\lambda_t=e^{-\gamma(t-t_0)}$. Furthermore, the functions $H_0$ and $H_1$ are in involution, i.e., $[H_0,H_1]_\eta=0$.

\end{example}

\begin{example}
{\bf(A stochastic contact Hamiltonian system in Sasaki-Einstein space)}  The homogeneous toric Sasaki-Einstein Space  $T^{1,1}$ was considered as the first example of toric Sasaki-Einstein/quiver duality and provides supersymmetric backgrounds relevant for the Ads/CFT correspondence \cite{Visinescu2017contact}. The space $T^{1,1}$ can be regarded as a $U(1)$ bundle over $S^2\times S^2$. We denote by $(\theta_i,\phi_i)$, $i=1,2,$ the coordinates which parametrize the two spheres $S^2$ in the conventional way, while the angle $\psi\in [0,4\pi)$ parametrizes the $U(1)$ fiber.

The globally defined contact 1-form $\eta_{SE}$ can be written in the form
\begin{align}\label{SE contact}
\eta_{SE}=\frac{1}{3}\left( d\psi+cos\theta_1d\varphi_1+cos\theta_2d\varphi_2\right).
\end{align}
The Reeb vector field defined by $\eta_{SE}$ is
\begin{align}\label{SE R}
\mathcal{R}_{SE}=3\frac{\partial}{\partial\psi}.
\end{align}
The contact Hamiltonian vector field $X_{H_k}$ with Hamiltonian $H_k$ can be expressed as
\begin{align}\label{SE vf}
X_{H_k}=&3\sum_{i}\frac{1}{\sin\theta_i}\left(\frac{\partial {H_k}}{\partial\varphi_i} -\frac{\partial {H_k}}{\partial\psi}\cos\theta_i\right)\frac{\partial}{\partial\theta_i}
-3\sum_{i}\frac{1}{\sin\theta_i}\frac{\partial {H_k}}{\partial\theta_i}\frac{\partial}{\partial\varphi_i}\notag\\
&+3\left({H_k}+\sum_{i}\frac{\cos\theta_i}{\sin\theta_i}\frac{\partial {H_k}}{\partial\theta_i}\right)\frac{\partial}{\partial\psi}.
\end{align}

Consider the following stochastic contact Hamiltonian system
\begin{eqnarray}\label{SE SHS}
d\begin{pmatrix}\theta_1 \\ \theta_2 \\ \varphi_1\\\varphi_2\\\psi\end{pmatrix}
=\begin{pmatrix}0 \\ 0 \\ 0\\ 0\\3\end{pmatrix}dt
+\begin{pmatrix}0 &0&0&\frac{3}{\sin\theta_1}&0\\ 0&0&0&0&\frac{3}{\sin\theta_2} \\ 0&1&0&0&0\\ 0&0&1&0&0\\3&0&0&3\varphi_1&3\varphi_2\end{pmatrix}\circ dB_t,
\end{eqnarray}
where $B_t$ is a 5-dimensional Brownian motion.
We find that the corresponding Hamiltonians $H_0=H_1=1$, $H_2=\frac{1}{3}\cos\theta_1$, $H_3=\frac{1}{3}\cos\theta_1$, $H_4=\varphi_1$ and $H_5=\varphi_2$, are independent and in involution. In particular, the constant Hamiltonian 1 corresponds to the Reeb vector field $\mathcal{R}_{SE}$ in \eqref{SE R}. Therefore, equation \eqref{SE SHS} is a stochastic completely integrable contact Hamiltonian system and the stochastic flow preserves the contact structure.

Moreover, using the analysis from Visinescu \cite{Visinescu2017contact}, we can choose $\mathcal{T}$ as a compact connected component of the level set $\{H_1=1, H_2 =c_1, H_3 =c_2\},$ where $c_1,c_2$ are positive constants. Then $\mathcal{T}$ is diffeomorphic to a $\TT^3$ torus. Let $D$ be an open domain in $\RR^2$. There exist a neighborhood $U$ of $\mathcal{T}$ and a diffeomorphism $\Phi:\;U\to\mathcal{T}\times D$
\begin{align}\label{SE AA}
\Phi(x)=(\vartheta_1,\vartheta_2,\vartheta_3,y_1,y_2).
\end{align}
More precisely, we can introduce the angle variables
\begin{align}\label{SE Angle}
\vartheta_0=\frac{\psi}{3},\;\vartheta_1=\varphi_1,\;\vartheta_2=\varphi_2,
\end{align}
and the generalized action variables
\begin{align}\label{SE Action}
y_0=1,\;y_1=\frac{1}{3}\cos\theta_1,\;y_2=\frac{1}{3}\cos\theta_2.
\end{align}
Therefore, equation \eqref{SE SHS} becomes simplier in these coordinates:
\begin{eqnarray}\label{SE AA}
d\begin{pmatrix}y_1 \\ y_2 \\ \vartheta_1\\ \vartheta_2\\ \vartheta_0\end{pmatrix}
=\begin{pmatrix}0 \\ 0 \\ 0\\ 0\\1\end{pmatrix}dt
+\begin{pmatrix}0&0&0&1&0\\ 0&0&0&0&1 \\ 0&1&0&0&0\\ 0&0&1&0&0\\1&0&0&\vartheta_1&\vartheta_2\end{pmatrix}\circ dB_t.
\end{eqnarray}
Note that the contact form has the canonical expression \cite{Visinescu2017contact} $\eta_0=(\Phi^{-1})^\ast\eta_{SE}=\sum_{i=0}^3y_id\vartheta_i.$
We call the local coordinates $(\vartheta_i,y_i)$ the generalized contact action-angle coordinates.

\end{example}


\section*{DATA AVAILABILITY}
The data that support the findings of this study are available within the article.

\section*{Acknowledgments}
The authors would like to thank Prof Maosong Xiang, Dr Lingyu Feng, Dr Jianyu Hu, Dr Li Lv and Dr Jun Zhang for helpful discussions. This work was partly supported by NSFC grants 11771449 and 11531006.


%
%

%


\bibliographystyle{unsrt}
\bibliography{ref-yuan}

\end{document}